\documentclass{amsart}

\usepackage{amsfonts,amssymb,enumerate,color,bm,hhline,makecell,array}

\usepackage{array}
\usepackage{mathtools,amssymb}
\usepackage{mathrsfs}
\usepackage{comment}
\usepackage[all,ps,cmtip]{xy} 
\usepackage[normalem]{ulem}
\usepackage{hyperref}
\usepackage{tikz-cd}
\usepackage{amscd}
\usepackage[shortlabels]{enumitem}
\usepackage{tensor}
\usepackage{tikz}
\usetikzlibrary{matrix,arrows}
\usepackage{bm}
\usepackage{graphicx}
\usepackage{extarrows}
\usepackage{subcaption}
\usepackage{mathtools}
\usepackage{comment}
\usetikzlibrary{cd, calc}

\addtocontents{toc}{\protect\setcounter{tocdepth}{1}}

\newtheorem{theorem}{Theorem}
\newcommand{\To}{\longrightarrow}

\newtheorem{lemma}{Lemma}[section]
\newtheorem{defi}{Definition}[section]
\newtheorem{coro}{Corollary}
\newtheorem{corollary}[lemma]{Corollary}

\newtheorem{remark}[lemma]{Remark}

\newcommand{\Jacbar}{\overline{J}}

\newtheorem{thm}[lemma]{Theorem}

\usepackage{mathrsfs}

\newcommand{\len}{\operatorname{length}}

\title{Stable maps, multiplicities, and compactified Jacobians}
\author{Yifan Zhao}
\address{Department of Mathematics, Imperial College, London SW7 2AZ, United Kingdom}
\email{yz23@ic.ac.uk}
\usepackage[
  backend=biber,
  style=numeric,
]{biblatex}

\begin{document}

\maketitle
\begin{abstract}
Let $C$ be a complex projective integral curve with planar singularities. In this note, we study numerical relations among its versal deformation space, moduli space of stable maps, and compactified Jacobian. In particular, we correct a statement by Fantechi--G\"ottsche--van Straten on the multiplicity of the $\delta$-constant stratum of the versal deformation space at $[C]$. We also give a necessary and sufficient condition for the original claim to hold. 
\end{abstract}
\section{Introduction}
Let $C$ be a complex projective integral curve with planar singularities, i.e. the embedding dimension at any point of $C$ is at most 2. Denote by $g_a$ and $g$ the arithmetic and geometric genera of $C,$ respectively. We study numerical relations among three fundamental spaces associated with $C$:
the (smooth) space $B$ of its versal deformation \cite{def}, the 0-dimensional moduli space $M_{g}(C,[C])$ of genus $g$ stable maps \cite{Kon}, and its compactified Jacobian $\Jacbar_C$ \cite{ak,sou}.

The $\delta$-invariant $\delta(C)$ of the curve is by definition $g_{a}-g.$ Inside a versal deformation space $B$ of $C$, we consider the reduced closed subscheme $ B^{\delta}\subset B$ parametrising curves with the same $\delta$-invariant as $\delta(C).$ This closed subscheme $B^{\delta}$ is called the $\delta$-constant stratum of $B$ in \cite{eu}.

Let $m([C], B^{\delta})$ be the Hilbert--Samuel multiplicity of $B^{\delta}$ at the base point $[C]$. Generalizing Beauville's result \cite[Proposition 2.2]{Beau} on the Euler characteristic $\chi(\Jacbar_C)$ of compactifed Jacobians for irrational curves, Fantechi--G\"ottsche--van Straten \cite[Theorem 1]{eu} prove that
\begin{align}\label{eq 1}
    \chi(\Jacbar_{C}) = \begin{cases}
            0 &g>0,\\
            m([C], B^{\delta}) &g=0.
           \end{cases}
\end{align}
See Remark \ref{rmk 2.1} for identification of the Hilbert--Samuel multiplicity with the notion of multiplicity used in \cite{eu}.

It is also stated in \cite[Theorem 2]{eu} that for the integral planar curve $C$, 
\begin{equation} \label{eq *} \tag{*}
m([C], B^{\delta})=\len(M_{g}(C,[C])).
\end{equation}
However, Oblomkov--Yun \cite[Section 7.2]{oy} later provided examples of planar rational curves $C$ for which $$\chi(\Jacbar_{C})>\len(M_{0}({C},[{C}])),$$ thereby  disproving (\ref{eq *}). Our first result shows that (\ref{eq *}) holds if and only if the fat point $M_{g}(C,[C])$ has a local complete intersection singularity.
\begin{theorem}[Theorem \ref{thm 3.1}]\label{theorem 1}
The multiplicity of the point $[C]$ in $B^{\delta}$ satisfies:
$$
m([C],B^{\delta})\geq \len(M_{g}(C,[C]),
$$ and the equality holds if and only if $M_{g}(C,[C])$ is a local complete intersection.  
\end{theorem}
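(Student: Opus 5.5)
The strategy is to realize the fat point $M_g(C,[C])$ as the scheme-theoretic fiber, over $[C]$, of a generically finite map onto the $\delta$-constant stratum. The inequality and the equality criterion then follow from standard commutative algebra of multiplicities on a Cohen--Macaulay space.

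The natural map comes from the relative moduli of stable maps. Let $\mathcal{C}\to B$ be the versal family. Consider the relative moduli space $\mathcal{M}=M_g(\mathcal{C}/B)$ of genus $g$ stable maps into fibers, in the fiber class. Following Fantechi--G\"ottsche--van Straten, I will use three facts:
\begin{enumerate}[(i)]
\item $\mathcal{M}$ is smooth of dimension $\dim B^{\delta}=\dim B-\delta$. This comes from unobstructedness of deformations of the normalization map, i.e.\ the equisingular/$\delta$-constant deformation theory of parametrized planar curves.
\item The forgetful map $\pi\colon\mathcal{M}\to B$ is finite onto its image.
\item $\pi$ is birational onto $B^{\delta}$. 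A general point of $B^{\delta}$ is a nodal curve with $\delta$ nodes, which has a unique genus $g$ normalization map, unramified.
\end{enumerate}
In addition, $\pi^{-1}([C])=M_g(C,[C])$ scheme-theoretically. This holds because $C$ is integral, so the only stable map is the normalization $\tilde C\to C$.

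Next, localize at $[C]$. Put $A=\mathcal{O}_{B,[C]}$, which is regular, and let $P$ be the prime of $B^{\delta}$. Let $R=\mathcal{O}_{\mathcal M,\pi^{-1}[C]}$. By (i) and finiteness, $R$ is a finite $A$-algebra, local, regular of dimension $d=\dim B^{\delta}$. Choose a system of parameters $x_1,\dots,x_d$ in $A/P$, generic enough that it generates a reduction of the maximal ideal of $\mathcal O_{B^\delta,[C]}$, so that $m([C],B^\delta)=e(\underline x; A/P)$. Since $R$ is Cohen--Macaulay (indeed regular), $\underline x$ is a regular sequence on $R$. Birationality gives $e(\underline x;R)=e(\underline x;A/P)\cdot\deg\pi=e(\underline x;A/P)$, and Cohen--Macaulayness gives $e(\underline x;R)=\len R/\underline xR$. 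On the other hand, $\underline x R\subseteq \mathfrak m_A R$, and $R/\mathfrak m_AR$ is the coordinate ring of $M_g(C,[C])$. Therefore
\[
m([C],B^\delta)=\len(R/\underline xR)\ \ge\ \len(R/\mathfrak m_AR)=\len M_g(C,[C]),
\]
which proves the inequality.

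For the equality case, equality holds if and only if $\underline xR=\mathfrak m_AR$, i.e.\ the fiber ideal is generated by $d$ elements in the $d$-dimensional regular ring $R$. If this holds, the fiber is a complete intersection. Conversely, suppose $M_g(C,[C])=\operatorname{Spec} R/\mathfrak m_AR$ is lci. Then, as a zero-dimensional quotient of the regular $d$-dimensional ring $R$, its defining ideal $I=\mathfrak m_AR$ is generated by $d$ elements; this uses that the minimal number of generators of $I$ equals $d$ plus the lci defect. I then need those generators to come from $A$, i.e.\ from $\mathfrak m_A$. Since $I$ is generated by the images of generators of $\mathfrak m_A$, general $A$-linear combinations of them (Nakayama applied to $I/\mathfrak m_RI$, with infinite residue field $\mathbb C$) give $d$ elements of $\mathfrak m_A$ generating $I$. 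Their images in $A/P$ form a system of parameters whose ideal has the same integral closure as the maximal ideal there, as needed to compute $m([C],B^\delta)$. Hence equality holds.

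The main obstacle is justifying (i)--(iii) rigorously, especially the smoothness of the relative stable map space. I would take these from \cite{eu}, where the proof of Theorem~2 establishes them, and locate the error of that paper in the final step. It implicitly assumed $\len R/\underline xR=\len R/\mathfrak m_AR$, i.e.\ it equated the multiplicity with the fiber length.
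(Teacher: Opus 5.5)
Your argument is correct and is essentially the paper's proof in local-algebra language. Both proofs pass through the smooth space $M_g(\mathcal{C},[C])$, which maps bijectively and birationally (as the normalization) onto $B^{\delta}$ with fibre $M_g(C,[C])$ over $[C]$; your associativity-formula step and your Cohen--Macaulay/parameter-ideal step are exactly what the paper cites as birational invariance of Segre classes \cite[Proposition 4.2(a)]{ful} and \cite[Example 4.3.5(c)]{ful}.

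One small correction: the scheme-theoretic equality $\pi^{-1}([C])=M_g(C,[C])$ comes from base-change compatibility of the moduli space (the paper's Lemma~\ref{lem 3.3}), not from integrality of $C$, which only shows that the fibre has a single closed point.
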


We moreover prove a corrected version of (\ref{eq *}), equating $m([C], B^{\delta})$ to the length of a similar 0-dimensional moduli space of stable maps: 

\begin{theorem}[Theorem \ref{thm 3.2}]\label{thm 2}
 Let $\mathcal{C}^{T}\to T$ be a flat family of integral planar curves with central fibre $\mathcal{C}^{T}_0=C$. Suppose that $T$ and the relative compactified Jacobian $\Jacbar(\mathcal{C}^{T}/T)$ are smooth, and that all fibres of $\mathcal{C}^{T}/T$ other than $\mathcal{C}^{T}_0=C$ have geometric genus greater than $g$. Then 
\begin{equation*} 
m([C], B^{\delta})=\len(M_{g}(\mathcal{C}^{T},[C]))\geq\len(M_{g}(C,[C])).
\end{equation*}    
\end{theorem}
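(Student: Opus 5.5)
The plan is to realise $m([C],B^{\delta})$ as the Euler characteristic of a fibre of a relative compactified Jacobian, and then read it off, via an Euler characteristic computation, as the length of a fibre of the stable map space.

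\textbf{Step 1: Euler characteristic as a multiplicity along $T$.} Consider $\pi:\Jacbar(\mathcal{C}^{T}/T)\to T$. It is proper, its source and target are smooth, and its fibres all have dimension $g_a$ (compactified Jacobians of integral planar curves are irreducible of dimension $g_a$). Hence $\pi$ is flat, and so is the relative stable map space $M_g(\mathcal{C}^T/T)$.

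To relate the two, I use the Fantechi--G\"ottsche--van Straten argument behind \eqref{eq 1}. One decomposes the Euler characteristic of each fibre according to $\delta$-constant strata. Every fibre $\mathcal{C}^T_t$ with $t\neq 0$ has geometric genus $>g$. The hypothesis is chosen precisely so that the cycle $\pi_*$-type count (a Behrend/Euler-obstruction weighted count, or better a normal-cone computation on the smooth total space) of the locus where the normalisation has genus exactly $g$ is supported at $0$.

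I would identify the relevant subscheme: the locus $T^{g}\subset T$ over which the fibre admits a genus $g$ stable map in class $[C]$. Scheme-theoretically this is the image of $M_g(\mathcal{C}^T,[C])\to T$, which is finite and supported at $0$ by hypothesis.

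\textbf{Step 2: compare with $B^{\delta}$ via versality.} The family $\mathcal{C}^T$ is pulled back from $B$ by a map $\phi:T\to B$. Smoothness of $\Jacbar(\mathcal{C}^T/T)$ should force $\phi$ to be transverse to $B^{\delta}$ in the appropriate sense. This is the FGvS/Shende-type statement that smoothness of the relative compactified Jacobian is equivalent to the codimension of $\phi(T)$ matching, i.e.\ $\phi$ is ``$\delta$-versal''. Consequently $\phi^{-1}(B^{\delta})$ is a fat point whose length equals $m([C],B^{\delta})$, by the standard fact that for a Cohen--Macaulay (here: $B^{\delta}$ is known to be of pure codimension $\delta$, and the relevant germ is equigenerically handled by normalisation) germ cut by a transverse complementary-dimensional subspace, the intersection length computes the multiplicity provided the intersection is a complete intersection. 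I would then show that $\phi^{-1}(B^{\delta})$ is scheme-theoretically $M_g(\mathcal{C}^T,[C])$. Both represent ``deformations of $C$ over $T$ admitting a simultaneous normalisation of genus $g$'', by Teissier's simultaneous normalisation for $\delta$-constant families.

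\textbf{Step 3: the inequality.} $M_g(C,[C])$ is the fibre of $M_g(\mathcal{C}^T,[C])$ over the closed point of $T$, hence a closed subscheme. Therefore its length is at most $\len(M_g(\mathcal{C}^T,[C]))$.

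\textbf{Main obstacle.} The hard part is Step 2's length computation. Identifying multiplicity with intersection length needs that the fat point $M_g(\mathcal{C}^T,[C])$ is cut out by $\dim T$ equations in $T$. I would get this from the obstruction theory: the stable map space over smooth $T$ has a perfect obstruction theory of virtual dimension $0$, and it is cut out of a smooth space by exactly $\dim T$ equations. So it is an lci of expected dimension, and the length equals the intersection multiplicity of $\phi(T)$ with $B^{\delta}$. That multiplicity equals $m([C],B^{\delta})$ once transversality of tangent cones is established, which is what smoothness of $\Jacbar(\mathcal{C}^T/T)$ provides, via the FGvS relation $\chi=m$.
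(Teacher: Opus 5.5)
There is a genuine gap in Step~2, and it sits exactly where the theorem has content. You assert that $\phi^{-1}(B^{\delta})=B^{\delta}\times_B T$ is scheme-theoretically $M_g(\mathcal{C}^T,[C])$, and that its length is $m([C],B^{\delta})$. Neither holds in general.

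\textbf{The identification fails.} Genus-$g$ stable maps into the fibres are parametrised not by the reduced stratum $B^{\delta}$ but by its normalisation $f\colon M_g(\mathcal{C},[C])\to B^{\delta}$, which is smooth (Lemma~\ref{1}). What is true (Lemma~\ref{lem 3.3}) is
$M_g(\mathcal{C}^T,[C])\cong M_g(\mathcal{C},[C])\times_B T=f^{-1}(B^{\delta}\times_B T)$.
Suppose $B^{\delta}$ is not normal at $[C]$. Then the cokernel $Q$ of $\mathcal{O}_{B^{\delta}}\to f_*\mathcal{O}_{M_g(\mathcal{C},[C])}$ is nonzero at $[C]$. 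By Nakayama $Q\otimes\mathcal{O}_T\neq 0$, so $\mathcal{O}_{B^{\delta}\times_B T}\to\mathcal{O}_{f^{-1}(B^{\delta}\times_B T)}$ is not surjective.

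A cusp already shows this. Take $y^2=x^3+ax+b$, so $B^{\delta}=\{4a^3+27b^2=0\}$, normalised by $t\mapsto(-3t^2,2t^3)$, and let $T=\{b=\lambda a\}$ with $\lambda\neq 0$. The natural map $\mathbb{C}[a]/(a^2)\to\mathbb{C}[t]/(t^2)$ sends $a\mapsto 0$. In particular $M_g(\mathcal{C}^T,[C])\to T$ need not be a closed immersion, so ``cut out by $\dim T$ equations in $T$'' is not meaningful.

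\textbf{The length claim needs Cohen--Macaulayness.} The equality $\len(B^{\delta}\times_B T)=m([C],B^{\delta})$ would require $B^{\delta}$ to be Cohen--Macaulay at $[C]$, which is not available. What transversality actually gives is weaker. One has $\dim T=\delta(C)$ and the tangent cone of $B^{\delta}$ is supported on a codimension-$\delta(C)$ linear space, so Fulton's Theorem~12.4 yields $m([C],B^{\delta})=i([C],B^{\delta}\cdot T;B)=s(B^{\delta}\times_B T,B^{\delta})$. This is an \emph{intersection multiplicity}, not a length.

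\textbf{The obstruction-theory route to lci fails as stated.} Since $T_{\mathcal{C}^T/T}$ is not locally free, there is no standard relative theory. The absolute theory for maps into the total space (when it is smooth) has virtual dimension $-\delta(C)(1+g)<0$, not $0$.

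\textbf{The missing step.} You only gesture at normalisation in a parenthesis, but it has to be the main step: carry the intersection number up to the smooth normalisation.
\begin{itemize}
\item By birational invariance of Segre classes (Lemma~\ref{2}), $s(B^{\delta}\times_B T,B^{\delta})=s\bigl(f^{-1}(B^{\delta}\times_B T),M_g(\mathcal{C},[C])\bigr)$.
\item $T\subset B$ is locally cut out by $\operatorname{codim}(T\subset B)=\dim M_g(\mathcal{C},[C])$ equations. Their pullbacks cut out the fat point $f^{-1}(B^{\delta}\times_B T)$ in the smooth $M_g(\mathcal{C},[C])$.
\item Hence that fat point is a complete intersection, and its Segre class equals its length.
\item Lemma~\ref{lem 3.3} then identifies it with $M_g(\mathcal{C}^T,[C])$.
\end{itemize}
This is also where the lci property genuinely comes from.

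\textbf{The other steps.} Step~1 should be discarded. $M_g(\mathcal{C}^T,[C])$ is a fat point over the $\delta(C)$-dimensional $T$, hence not flat over $T$. Moreover, an Euler-characteristic argument cannot recover $m([C],B^{\delta})\geq 1$ when $g>0$, since then $\chi(\Jacbar_C)=0$. Your Step~3 (the inequality, via $M_g(C,[C])$ being the closed fibre over $0$) is correct.
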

For a fixed curve $C$, such a family $\mathcal{C}^{T}/T$ always exists by \cite{eu}. When $C$ is moreover rational, Theorem \ref{thm 2} and the equality (\ref{eq 1}) give
\begin{equation} \label{eq 2}
 \chi(\Jacbar_C)=\len(M_{g}(\mathcal{C}^{T},[C])).   
\end{equation}
\subsection*{Rational curves on K3 surfaces}
The topology of compactified Jacobians plays an important rôle in algebraic geometry, knot theory, and geometric representation theory (see, e.g.,  \cite{GKS,ORS,shen} and the references therein). In particular, the Euler characteristic  $\chi(\Jacbar_C)$ is closely related to the classical Yau--Zaslow formula \cite{YZ} of rational curves on K3 surfaces, which motivated the works  \cite{Beau,eu}.

For $C$ a rational curve on a K3 surface $S$, we can take the base $T$ in Theorem \ref{thm 2} to be an open subset of the linear system $|C|$. Denoting by $f_C$ the composition $\mathbb{P}^1\to C\xhookrightarrow{}S,$  we obtain:
\begin{coro}[Corollary \ref{coro 3.1}]\label{coro 1}
Suppose $C$ is a rational curve on a K3 surface $S.$ The moduli space of stable maps $M_0(S,[C])$ contains a connected component $M_0(S,[C])_{[f_C]}$ with a unique closed point $[f_C].$ We have
\begin{equation}\label{eq 3}
\chi(\Jacbar_C)=\len(M_0(S,[C])_{[f_C]}).
\end{equation}
\end{coro}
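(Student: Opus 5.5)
We deduce the corollary from Theorem \ref{thm 2} together with the equality (\ref{eq 1}) for $g=0$. The plan has three steps: build a family $\mathcal{C}^T/T$ from the linear system $|C|$ satisfying the hypotheses of Theorem \ref{thm 2}; identify $M_0(\mathcal{C}^T,[C])$ with a connected component of $M_0(S,[C])$ supported at $[f_C]$; then read off the equality of lengths.

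\textbf{Step 1: the family.} Since $C$ lies on a K3 surface, it is a reduced planar curve and $|C|\cong\mathbb{P}^{g_a}$. Let $\mathcal{C}\to|C|$ be the universal curve, and take $T\subset|C|$ to be a small open neighbourhood of $[C]$ such that:
\begin{enumerate}[(a)]
\item every fibre over $T$ is integral;
\item the fibre over $[C]$ is the only rational fibre.
\end{enumerate}
Condition (a) is open, since $C$ is integral. For (b), rational curves in $|C|$ on a K3 surface form a countable union of closed subsets, and rigidity shows they are isolated points. So, after shrinking $T$ in the analytic topology, or Zariski-locally after removing finitely many of the (finitely many, by properness of $M_0(S,[C])$) rational curves other than $C$, the fibres over $T\setminus\{[C]\}$ have geometric genus $>0=g$.

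Smoothness of $\Jacbar(\mathcal{C}^T/T)$ is the known result for families of curves on a K3 surface: the relative compactified Jacobian is an open subset of a moduli space of stable sheaves (Mukai/Beauville), which is smooth. $T$ is an open subset of projective space, hence smooth. Theorem \ref{thm 2} then gives
\[
m([C],B^\delta)=\len(M_0(\mathcal{C}^T,[C])),
\]
and (\ref{eq 1}) identifies the left side with $\chi(\Jacbar_C)$.

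\textbf{Step 2: the component.} We compare $M_0(\mathcal{C}^T,[C])$ with $M_0(S,[C])$.
\begin{itemize}
\item Any genus-$0$ stable map to $S$ in class $[C]$ has image an effective curve in class $[C]$. A point of $M_0(S,[C])$ near $[f_C]$ is a map whose image lies in some member of $|C|$ near $C$, hence over $T$.
\item By the choice of $T$, the only such map is birational onto $C$, namely $f_C$. Its automorphism group is trivial and the normalisation is unique, so $[f_C]$ is an isolated closed point. The connected component $M_0(S,[C])_{[f_C]}$ is therefore a fat point.
\item To compare scheme structures, a map $F:\mathcal{P}\to S\times A$ over an Artinian base $A$ deforming $f_C$ has image a flat family of divisors in class $[C]$. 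Its classifying map $A\to|C|$ lands in $T$, so $F$ factors through $\mathcal{C}^T$. Conversely, a map to $\mathcal{C}^T$ composes to $S$.
\end{itemize}
Because $H^1(S,\mathcal{O}_S)=0$, line bundles on $S$ deform uniquely, and divisors in class $[C]$ over $A$ are exactly $A$-points of $|C|$. This gives an isomorphism of deformation functors, hence
\[
M_0(S,[C])_{[f_C]}\cong M_0(\mathcal{C}^T,[C]).
\]

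\textbf{Main obstacle.} The delicate point is the scheme-theoretic factorisation in Step 2. One must show that the scheme-theoretic image of a family of stable maps over an Artinian base is a flat relative effective divisor. I would handle this by taking the Fitting-ideal (norm) divisor of $F_*\mathcal{O}_{\mathcal{P}}$, which is flat and specialises to $C$ since $f_C$ is birational. Alternatively, one can work with the closed embedding $\mathcal{C}^T\subset S\times T$ and check that the obstruction to factoring vanishes because the map $T\to \operatorname{Hilb}$ is an open immersion. Combining Steps 1 and 2 gives (\ref{eq 3}).
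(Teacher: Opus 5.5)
Your proposal is correct and has the same overall structure as the paper's proof. You choose an open $T\subset|C|$ of integral curves with no other rational member, get smoothness of $\Jacbar(\mathcal{C}^T/T)$ from stable sheaves on $S$, apply Theorem \ref{thm 2} with (\ref{eq 1}), and then identify $M_0(\mathcal{C}^T,[C])\cong M_0(S,[C])_{[f_C]}$.

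The difference is how the stable-map side gets its map to $T$. The paper pushes $\mathcal{O}_{\mathbb{U}}$ forward along the finite universal map $\nu$ and treats it as a family of stable sheaves, hence a map to $\Jacbar(\mathcal{C}^T/T)$. It then composes with $\Jacbar(\mathcal{C}^T/T)\to T$. The factorisation of $\nu$ follows because the pulled-back universal sheaf is an $\mathcal{O}_{\mathcal{C}^T}$-module.

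You bypass the moduli of sheaves in this step. The ideal $\operatorname{Fitt}_0(F_*\mathcal{O}_{\mathcal{P}})$ cuts out an $A$-flat relative Cartier divisor, since the two-term resolution of $(f_C)_*\mathcal{O}_{\mathbb{P}^1}$ lifts by flatness. The inclusion $\operatorname{Fitt}_0\subset\operatorname{Ann}$ gives the factorisation, since $F$ is affine. Finally $H^1(\mathcal{O}_S)=0$ turns the divisor into an $A$-point of $|C|$, hence of $T$. Your route is more elementary and makes the scheme-theoretic factorisation explicit. The paper's is shorter once $\Jacbar(\mathcal{C}^T/T)$ is known to be open in the moduli of sheaves. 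The two constructions agree, because the support map $\Jacbar(\mathcal{C}^T/T)\to T$ is itself the Fitting support.

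To get an isomorphism of functors rather than just two maps, you still need to check one thing. If $F$ already factors through $D'=\mathcal{C}^T\times_T A$, the Fitting divisor must recover $D'$. This holds for two reasons:
\begin{itemize}
\item The map $\mathcal{O}_{D'}\to F_*\mathcal{O}_{\mathcal{P}}$ is an isomorphism at the generic point of $C$. It is a map of $A$-flat modules that is an isomorphism on the closed fibre, because $f_C$ is birational.
\item $\mathcal{O}_{D'}$ has no embedded points, being $A$-flat with Cohen--Macaulay closed fibre.
\end{itemize}
Hence $D'$ is the scheme-theoretic image of $F$, and the same argument applies to the Fitting divisor, so the two coincide. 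This also settles your ``main obstacle'' directly: the scheme-theoretic image is itself the flat divisor. The paper asserts the corresponding inverse property equally briefly.
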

The right hand side of (\ref{eq 3}) can be viewed as the local contribution of the rational curve $C$ to the genus 0 reduced Gromov--Witten invariant \cite{bl,MP} of $S$. Indeed, suppose that all curves in the linear system $|C|$ are integral, the genus 0 reduced Gromov--Witten invariant can be expressed as
\begin{align*}
\text{GW}_0(S,[C])&=\len(M_0(S,[C])) \\ 
&=
\sum_{C_0\in |C| \text{ is rational}}\len(M_0(S,[C])_{[f_{C_0}]}) \\
&=\sum_{C_0\in |C| \text{ is rational}}\chi(\Jacbar_{C_0}),
\end{align*}
 where the first equality follows from the fact that the reduced GW virtual dimension equals the actual dimension of $M_0(S,[C]),$ and the last equality is Corollary \ref{coro 1}.

By Beauville's arguments \cite[Corollary 2.3]{Beau}, the above shows that $\text{GW}_0(S,[C])$ satisfies the Yau--Zaslow formula. While (\ref{eq 3}) is also stated in \cite[Theorem 2]{eu}, the proof there requires the incorrect equality (\ref{eq *}). We complete the proof by replacing (\ref{eq *}) with Theorem \ref{thm 2}.

As part of Theorem \ref{thm 2}, we obtain an intuitive description of $\len(M_0(S,[C])_{[f_C]})$, which is the local contribution to $\text{GW}_0(S,[C])$: the linear system $|C|$ (analytically or étale locally) admits a  classifying map to the versal deformation space $B$ 
$$
|C|\To B.
$$ Denote by $i([C],|C|\cdot B^{\delta};B)$ the intersection multiplicity \cite[Definition 7.1]{ful} of $|C|$ and $B^{\delta}$ at $[C]\in B.$ Then $$
\len(M_0(S,[C])_{[f_C]})=i([C],|C|\cdot B^{\delta};B).
$$ 

\subsection*{Idea of the proof}Our approach is intersection-theoretic in nature and builds heavily on \cite{eu}. The proofs of Theorem \ref{theorem 1} and \ref{thm 2} rely crucially on the various interpretations of the multiplicity $m([C], B^{\delta})$. We review in Section \ref{sec 2} the necessary background on multiplicities, and prove Theorems \ref{theorem 1} and \ref{thm 2} in Section \ref{sec 3}. 
\subsection*{Acknowledgements}
I thank Richard Thomas for suggesting that I think about \cite{eu} and for crucial discussions, suggestions and support. I am also grateful to  Soheyla Feyzbakhsh for many helpful discussions, and to Francesca Carocci, Thomas Dedieu and Oscar Kivinen for useful email correspondence.

This work was supported by the Engineering and Physical Sciences Research
Council [EP/S021590/1], the EPSRC Centre for Doctoral Training in Geometry and Number
Theory (The London School of Geometry and Number Theory), University College London.

\section{Multiplicities}\label{sec 2}
We first recall the definition of the Hilbert--Samuel multiplicity of a (fat) point:
\begin{defi}[Hilbert--Samuel multiplicity]
Given a closed point $x$ in an $n$-dimensional variety $X$, suppose $I$ is an ideal of the local ring $\mathcal{O}_{X,x}$ such that the radical $\sqrt{I}$ equals $m_x,$ the maximal ideal of $\mathcal{O}_{X,x}.$

For large $i$, $\len(\mathcal{O}_{X,x}/I^i)
$ is a degree $n$ polynomial in $i$ and we can write  $$
\len(\mathcal{O}_{X,x}/I^i)=e(I)\cdot\frac{i^n}{n!}+\text{lower degree terms}.
$$ The integer $e(I)$ is the Hilbert--Samuel multiplicity of the ideal $I.$

When $I=m_x,$ $e(I)$ is called the multiplicity of $x\in X,$ which we denote by $m(x,X).$
\end{defi}
The multiplicity $m(x,X)$ of $x\in X$ can be interpreted geometrically as an intersection multiplicity \cite[Definition 7.1]{ful}. Recall that if two subvarieties $M,N\subset\mathbb{A}^m$ intersect only at $x\in \mathbb{A}^m$, then the intersection multiplicity $i(x,M\cdot N; \mathbb{A}^{m})$ is defined as the intersection product $M\cdot N\in A_0(x)\cong \mathbb{Z}$ and is at most $\len(\mathcal{O}_{M\times_{\mathbb{A}^m}N})$. Equality holds if the local ring $\mathcal{O}_{M\times_{\mathbb{A}^m}N}$ is Cohen-Macaulay, while in general only $i(x,M\cdot N; \mathbb{A}^{m})$ is invariant under deformations.
\begin{lemma}[{\cite[Example 12.4.5]{ful}}]\label{lem 2.1}
Embed the n-dimensional variety $X$ Zariski locally inside an affine space $\mathbb{A}^{m}$. Let $L$ be an $m-n$ dimensional subvariety of $\mathbb{A}^{m}$ such that $x$ is an isolated closed point in the intersection $X\cdot L,$ then 
$$
m(x,X)=\emph{min } i(x,X\cdot L; \mathbb{A}^{m}),
$$ where the minimum is taken over all such subvarieties $L\subset \mathbb{A}^{m}.$ 
\end{lemma}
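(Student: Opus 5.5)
The plan is to compare both sides through tangent cones, using Fulton's refined intersection theory (Chapter 12). Write $C_xX\subset T_x\mathbb{A}^m\cong\mathbb{A}^m$ for the tangent cone of $X$ at $x$, and likewise $C_xL$ for $L$. Recall that the degree of the projectivised tangent cone $\mathbb{P}(C_xX)\subset\mathbb{P}^{m-1}$ equals $m(x,X)$, since the Hilbert--Samuel polynomial of $m_x$ is the Hilbert polynomial of the associated graded ring $\bigoplus m_x^i/m_x^{i+1}$. The proof has two halves: a lower bound valid for every admissible $L$, and one choice of $L$ that attains it.

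\emph{Lower bound.} For any $L$ of dimension $m-n$ with $x$ an isolated point of $X\cap L$, I would invoke Fulton's multiplicity inequality \cite[Corollary 12.4]{ful}:
$$
i(x,X\cdot L;\mathbb{A}^m)\;\geq\; m(x,X)\cdot m(x,L)\;\geq\; m(x,X).
$$
The proof of that corollary runs through deformation to the normal cone. The intersection product at $x$ is the degree of a class supported on $C_xX\times C_xL$ intersected with the diagonal in $T_x\mathbb{A}^m\times T_x\mathbb{A}^m$. Excess contributions can only increase the number beyond the product of the degrees of the tangent cones. The second inequality holds because $m(x,L)\geq 1$.

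\emph{Equality for a generic linear $L$.} I would take $L$ to be a general linear subspace of dimension $m-n$ through $x$. Then $C_xL=L$ and $m(x,L)=1$. Since $\mathbb{P}(C_xX)$ has dimension $n-1$ and $\mathbb{P}(L)$ has dimension $m-n-1$ in $\mathbb{P}^{m-1}$, a general $L$ gives $\mathbb{P}(C_xX)\cap\mathbb{P}(L)=\emptyset$. This disjointness has two consequences.
\begin{enumerate}[(a)]
\item $x$ is an isolated point of $X\cap L$. Indeed, a curve in $X\cap L$ through $x$ would contribute a tangent direction lying in both cones. If $X\cap L$ has other components away from $x$, we can shrink the Zariski neighbourhood so that $x$ is the only point.
\item By the equality criterion in \cite[Corollary 12.4]{ful}, $i(x,X\cdot L)=m(x,X)\cdot m(x,L)=m(x,X)$.
\end{enumerate}

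Algebraically, $L$ is cut out by $n$ general linear forms $\ell_1,\dots,\ell_n$. Their images generate a reduction $I$ of $m_x\mathcal{O}_{X,x}$ in the sense of Northcott--Rees, so $e(I)=e(m_x)$. Moreover, for a proper intersection with a linear space cut out by a parameter ideal, one has $i(x,X\cdot L)=e(I)$ \cite[Example 7.1.10]{ful}. This gives a second route to the same equality. Combining the two halves yields $m(x,X)=\min_L i(x,X\cdot L;\mathbb{A}^m)$.

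The step I expect to be the main obstacle is the lower bound. In particular, one has to justify that the intersection multiplicity, which is defined by refined intersection, dominates the product of the tangent-cone degrees even when $L$ is singular or non-linear at $x$, and even when $\mathcal{O}_{X\cap L}$ is not Cohen--Macaulay. Here I would rely directly on the deformation-to-the-normal-cone argument behind \cite[Corollary 12.4]{ful}, rather than on length computations, since the length of $\mathcal{O}_{X\times_{\mathbb{A}^m}L}$ only bounds $i$ from above.
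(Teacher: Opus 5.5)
Your proposal is correct and takes essentially the same approach: the paper gives no proof of its own, only the citation \cite[Example 12.4.5]{ful}, and your two steps are the standard derivation of that example. The lower bound comes from $i(x,X\cdot L;\mathbb{A}^m)\geq m(x,X)\cdot m(x,L)$ in \cite[Corollary 12.4]{ful}, and equality holds for a general linear $L$ whose projectivisation misses $\mathbb{P}(C_xX)$, which is the same tangent-cone criterion the paper invokes in the proof of Theorem \ref{thm 3.2}.
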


\begin{remark}\label{rmk 2.1}
One may also define $m(x,X)$ via dynamic intersections, as in \emph{\cite[p.71, 75]{ag}} and \emph{\cite[Chapter 11]{ful}}. The dynamic definition is the one used in \emph{\cite{eu}}, and is central to the proof of \emph{\cite[Theorem 1]{eu}}.

More precisely, embed the $n$-dimensional variety $X$ Zariski locally inside an affine space $\mathbb{A}^{m}.$ Let $L$ be a generic $m-n$ dimensional linear subspace of $\mathbb{A}^{m}$ passing through $x.$

Fix a small open (analytic) neighbourhood $U$ of $x\in \mathbb{A}^{m}$ and then deform $L$ to a generic nearby linear subspace $L'$ (which need not contain $x$). Inside $U,$ the subvarieties $L'$ and $X$ intersect transversally at finitely many points.

The number $\#(X\cap L'\cap U)$ is independent of the embedding into the affine space. It is proved in  \emph{\cite[A.15 on p.123]{ag}} that $\#(X\cap L'\cap U)=m(x,X)$. 
\end{remark}

Given a $0$-dimensional fat point $Z\subset X$ and the corresponding ideal $I_Z\subset \mathcal{O}_{X,Z^{\text{red}}},$ the Hilbert--Samuel multiplicity $e(I_Z)$ can be computed via a Segre class \cite[Section 4]{ful}.
\begin{lemma}[{\cite[Example 4.3.4)]{ful}}]\label{lem 2.2}
Consider the Segre class $$s(Z,X)\in A_{*}(Z)=A_0(Z)=\mathbb{Z}[Z^{\emph{red}}]$$ of the closed subscheme $Z\subset X.$ We have $$
e(I_Z)=s(Z,X).
$$ 
\end{lemma}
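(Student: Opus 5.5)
The plan is to unwind the definition of the Segre class through the normal cone and reduce the statement to the classical fact that the degree of a projective scheme is read off from the leading coefficient of its Hilbert polynomial. Write $A=\mathcal{O}_{X,Z^{\text{red}}}$ and $I=I_Z$. Let $S=\bigoplus_{k\ge 0} I^k/I^{k+1}$ be the associated graded ring, so that the normal cone is $C=C_ZX=\operatorname{Spec} S$. By \cite[Section 4.1]{ful}, $s(Z,X)=s(C)$, and this is computed on the projective completion $P=P(C\oplus 1)=\operatorname{Proj} S[t]$ with projection $q\colon P\to Z$:
$$s(C)=q_*\Big(\sum_{i\ge 0} c_1(\mathcal{O}(1))^i\cap [P]\Big).$$

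Since $Z$ is $0$-dimensional, only the degree-$0$ part survives. The normal cone $C_ZX$ has pure dimension $n=\dim X$, so $P$ has pure dimension $n$. Hence
$$s(Z,X)=\int_P c_1(\mathcal{O}(1))^n\cap[P],$$
viewed as an integer multiple of $[Z^{\text{red}}]$. Here $[P]$ is the fundamental cycle with its geometric multiplicities.

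Next I compute the Hilbert function of $P$ with respect to $\mathcal{O}(1)$. The degree-$i$ piece of $S[t]$ is $\bigoplus_{k\le i} I^k/I^{k+1}\cdot t^{i-k}$, so
$$\len\big(S[t]_i\big)=\sum_{k=0}^{i}\len(I^k/I^{k+1})=\len(A/I^{i+1}).$$
By the definition of $e(I)$, for large $i$ this equals $e(I)\, i^n/n!$ plus lower-order terms. For $i\gg0$ we have $\chi(P,\mathcal{O}(i))=\len(S[t]_i)$, because $P$ is projective over the Artinian base $A/I$ and Serre vanishing applies.

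The final step is the identity
$$\int_P c_1(\mathcal{O}(1))^n\cap[P] = n!\cdot\big(\text{leading coefficient of } \chi(P,\mathcal{O}(i))\big),$$
for an $n$-dimensional projective scheme over an Artinian local ring with residue field $\mathbb{C}$. I would prove this by additivity. Both sides are additive over the $n$-dimensional irreducible components counted with their lengths; to see this, filter $\mathcal{O}_P$ by coherent subsheaves with integral quotients and note that lower-dimensional pieces do not affect either side. This reduces the identity to the case of an integral projective variety, where it is the usual statement that the degree equals $n!$ times the leading coefficient of the Hilbert polynomial (cf.\ \cite[Example 2.5.2, Example 18.3.6]{ful}). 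Combining the three steps gives $s(Z,X)=e(I_Z)$.

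I expect the only real subtlety to be this last comparison for non-reduced $P$: one must check that the lengths appearing in the filtration agree with the multiplicities in the fundamental cycle $[P]$. That is exactly how $[P]$ is defined, so the argument should go through once the filtration is set up carefully.
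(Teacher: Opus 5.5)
Your proposal is correct and is essentially the standard argument behind \cite[Example 4.3.4]{ful}, which the paper cites without giving a proof of its own: you compute $s(Z,X)$ as $\int c_1(\mathcal{O}(1))^n\cap[P(C_ZX\oplus 1)]$ and read this off from the Hilbert polynomial $\len(A/I^{i+1})$ of $S[t]$. One step is stated a bit tersely: the identification $\chi(P,\mathcal{O}(i))=\len(S[t]_i)$ for $i\gg 0$ needs, besides Serre vanishing, that $H^0(P,\mathcal{O}(i))=S[t]_i$ in large degrees, which holds because $S[t]$ is generated in degree one over $A/I$.
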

    Segre classes satisfy the following fundamental birational invariance property, which will be important to us.

\begin{lemma}[{\cite[Proposition 4.2(a)]{ful}}]\label{2}
Let $f:X' \to X$ be a proper birational morphism between irreducible varieties. Suppose $Z\subset X$ is a closed subscheme of $X,$ denote by $g$ the induced morphism $$
f^{-1}(Z)\xrightarrow[]{g}Z,
$$
we have:
$$
g_*\bigl(s(f^{-1}(Z),X')\bigr)=s(Z,X).
$$
\end{lemma}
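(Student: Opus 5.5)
The plan is to work with the blow-up description of Segre classes rather than the normal-cone definition, because blow-ups behave well under the universal property and the projection formula. I will use Fulton's Corollary 4.2.2. If $Z\subsetneq X$ is a proper closed subscheme of an irreducible variety $X$, let $\eta:\widetilde X=\mathrm{Bl}_Z X\to X$ be the blow-up, with exceptional Cartier divisor $E=\eta^{-1}(Z)$ and $\eta_E:E\to Z$. Then
\[
s(Z,X)=\sum_{k\ge 1}(-1)^{k-1}(\eta_E)_*\bigl(E^{k}\cap[\widetilde X]\bigr).
\]
If this is not taken as known, I would first derive it from the definition via $\mathbb{P}(C_ZX\oplus 1)$, using that $\mathbb{P}(C_ZX)$ is $E$ with $\mathcal{O}(1)=\mathcal{O}(-E)|_E$. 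The degenerate cases are separated off at the start. If $Z=\emptyset$ there is nothing to prove. If $Z=X$ then $f^{-1}(Z)=X'$, and $s(X,X)=[X]$ and $s(X',X')=[X']$. The claim then says $g_*[X']=[X]$, which holds because $f$ is proper birational, so $\deg f=1$.

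In the main case, set $Z'=f^{-1}(Z)$. Its ideal sheaf is $I_Z\cdot\mathcal{O}_{X'}$, and $Z'\neq X'$ since $f$ is birational and $Z$ is nowhere dense. Let $\eta':\widetilde X'=\mathrm{Bl}_{Z'}X'\to X'$ have exceptional divisor $E'$. The inverse image of $Z$ under $f\circ\eta'$ is $E'$, an effective Cartier divisor. By the universal property of blowing up, there is therefore a unique $F:\widetilde X'\to\widetilde X$ with $\eta\circ F=f\circ\eta'$ and $F^{-1}(E)=E'$, so $\mathcal{O}(E')=F^*\mathcal{O}(E)$. The map $F$ is proper, since $f\circ\eta'$ is proper and $\eta$ is separated. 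It is also birational, being an isomorphism over the dense open set $X\setminus Z$ where $f$ is an isomorphism (after shrinking). Hence $F_*[\widetilde X']=[\widetilde X]$.

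The key computation is the projection formula for Chern classes of line bundles:
\[
F_*\bigl(E'^{k}\cap[\widetilde X']\bigr)=F_*\bigl(c_1(F^*\mathcal{O}(E))^{k-1}\cap [E']\bigr)=E^{k}\cap F_*[\widetilde X']=E^{k}\cap[\widetilde X].
\]
This is viewed in $A_*(E)$ via the restriction $F_{E'}:E'\to E$. Here one must be careful that the refined intersection $E^k\cap-$ (first class $[E]$, then powers of $c_1$) commutes with proper pushforward. This follows from Fulton Prop. 2.5(c) together with the compatibility of the Gysin map for Cartier divisors with proper pushforward (Prop. 2.3(c)). Combining this with $\eta_E\circ F_{E'}=g\circ\eta'_{E'}$, pushing forward to $Z$ and summing with signs gives $g_*s(Z',X')=s(Z,X)$.

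I expect the main obstacle to be bookkeeping: making sure the divisor intersections are taken with supports in $E'$ and $E$, so that the pushforward lands in $A_*(Z)$ rather than $A_*(X)$. This needs the refined versions of the projection formula. The existence and birationality of $F$ also need a little care when $f$ fails to be an isomorphism over parts of $X\setminus Z$. In that case birationality still holds generically, and degree $1$ is all that is used.
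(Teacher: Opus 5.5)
Your proof is correct and is essentially the standard argument behind Fulton's Proposition~4.2(a), which the paper cites without giving a proof of its own: blow up $Z\subset X$ and $f^{-1}(Z)\subset X'$, get $F\colon \widetilde X'\to\widetilde X$ with $F^{-1}(E)=E'$ from the universal property, and push the blow-up formula $s(Z,X)=\sum_{k\ge 1}(-1)^{k-1}\eta_{E*}(E^k)$ forward, using Fulton's Propositions~2.3(c) and 2.5(c) together with $F_*[\widetilde X']=[\widetilde X]$. The one point to keep honest is where the blow-up formula comes from: derive it directly from the definition via $\mathbb{P}(C_ZX\oplus 1)$, as you sketch (the $i=0$ term $q_*[\mathbb{P}(C_ZX\oplus 1)]$ vanishes for dimension reasons when $Z\neq X$, and $c_1(\mathcal{O}(1))\cap[\mathbb{P}(C_ZX\oplus 1)]=[\mathbb{P}(C_ZX)]=[E]$ with $\mathcal{O}(1)|_E=\mathcal{O}(-E)|_E$), rather than quoting it from a presentation that deduces it from birational invariance, which would make the argument circular.
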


\section{Stable maps, multiplicities, and compactified Jacobians}\label{sec 3}
\subsection{Locally versal families}
We start by recalling the notion of locally versal families, which are convenient to work with in our situation.

Let $c_1,\cdots,c_n$ be the singularities of the curve $C$. For each $i$, consider the (smooth) miniversal deformation space\footnote{Also called the semi-universal deformaion space.} $B(c_i)$ of the singularity $c_i$ (see, e.g., \cite{DH}). Let $\mathcal{C}'\to S$ be a flat family of integral planar curves such that $S$ is smooth and the central fibre $\mathcal{C}'_0=C.$
\begin{defi}[Locally versal family, {\cite[Definition 4.1]{gv}}]
The family $\mathcal{C}'/S$ is called locally versal at $0\in S$ if the induced tangent map$$
T_0S\To \prod_{i=1}^{n}T_{[c_i]}B(c_i)$$
is surjective. Equivalently, the classifying map $$
S\xrightarrow[]{\text{ j }} \prod_{i=1}^{n}B(c_i),
$$ which exists étale locally, is a smooth morphism around $0\in S$.

The family $\mathcal{C}'/S$ is called locally versal if it is locally versal at all points $s\in S.$
\end{defi}
Denote by $B^{\delta_i}(c_i)$ the $\delta$-constant locus of $B(c_i),$  which is the reduced closed subscheme that parametrises deformations of the singularity $c_i$ with the same local $\delta$-invariant as $\delta(c_i).$  Suppose that $\mathcal{C}'/S$ is locally versal, we denote the inverse image $j^{-1}(\prod_{i=1}^{n}B^{\delta_i}(c_i))$ in $S$ by $S^{\delta}.$

The natural map from the deformation space $B$ of $C$ to the deformation space $\prod_{i=1}^{n}B(c_i)$ of the singularities is smooth \cite[Section A]{eu}. By passing to Segre classes via Lemma \ref{lem 2.2}, we get $$m([C],B^{\delta})=s([C],B^{\delta})=s\big(\prod_{i=1}^{n}[c_i],\prod_{i=1}^{n}B^{\delta_i}(c_i)\big)= m([C],S^{\delta}).$$

The above equality shows that to determine $m([C],B^{\delta}), $ one may also work with locally versal families. From now on, the family $\mathcal{C}/B$ of curves is only assumed to be locally versal. The following lemma will be used later.

\begin{lemma}[{\cite[Lemma 4.2]{gv}}]\label{lem 3.1}
For any projective flat family of integral curves $\mathcal{C}^{H}\to H$ with at
worst planar singularities, there is a locally versal family of curves $\mathcal{C}\to B$
and a closed immersion $H\xhookrightarrow{}B$ such that $\mathcal{C}^H = \mathcal{C}\times_B H$.    
\end{lemma}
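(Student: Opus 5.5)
The plan is to realise the whole family $\mathcal{C}^H/H$ inside one Hilbert scheme of embedded curves, and then check that the universal family over a suitable open subset of that Hilbert scheme is locally versal. I assume, as is implicit in \cite{gv}, that $H$ is quasi-projective.

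\textbf{Step 1 (uniform embedding).} Choose a relatively ample line bundle $\mathcal{L}$ on $\mathcal{C}^H/H$ and a large integer $k$. Then $\pi_*\mathcal{L}^k$ is locally free, its formation commutes with base change, and $\mathcal{L}^k$ is relatively very ample. Since $H$ is quasi-projective, after twisting by an ample bundle pulled back from $H$ we may choose a surjection $\mathcal{O}_H^{N+1}\to\pi_*\mathcal{L}^k$. This gives a closed embedding $\mathcal{C}^H\hookrightarrow\mathbb{P}^N\times H$ over $H$, hence a classifying morphism $\phi\colon H\to \mathrm{Hilb}$. Here $\mathrm{Hilb}$ is the Hilbert scheme of curves in $\mathbb{P}^N$ with the relevant Hilbert polynomial.

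\textbf{Step 2 (the base $B$).} Let $U\subset\mathrm{Hilb}$ be the open locus of integral curves with planar singularities on which $H^1(C_u,N_{C_u/\mathbb{P}^N})=0$. Set $B:=U\times H$, with the family pulled back from the universal curve over $U$. The graph $(\phi,\mathrm{id})\colon H\hookrightarrow B$ is a closed immersion, because $U$ is separated. It satisfies $\mathcal{C}^H=\mathcal{C}\times_B H$ by construction. For $\phi$ to land in $U$, we need $H^1(N)=0$ on every fibre $C_h$; I expect to get this by taking $k$ large, uniformly over the family.

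\textbf{Step 3 (local versality).} Planar curve singularities are lci, so $N_{C/\mathbb{P}^N}$ is locally free of rank $N-1$. The tangent–obstruction theory of $\mathrm{Hilb}$ at $[C]$ is then $H^0(N)$ and $H^1(N)$, so $U$ is smooth.

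Local-to-global comparison gives the exact sequence
\[
H^0(C,N_{C/\mathbb{P}^N})\to \bigoplus_i T^1_{c_i}\to H^1(C,N'),
\]
where $N'$ is the equisingular part of the normal sheaf, i.e.\ the image of $T_{\mathbb{P}^N}|_C$. Since $T^1_{c_i}\cong T_{[c_i]}B(c_i)$, it suffices to show $H^1(C,N')=0$. Now $T_{\mathbb{P}^N}|_C$ is a quotient of $\mathcal{O}_C(1)^{N+1}$, and $H^1$ of a quotient of an ample-twisted bundle on a curve vanishes. For $k$ large (so that $\mathcal{O}(1)=\mathcal{L}^k$ has large degree) this gives $H^1(N')=0$, hence surjectivity. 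Multiplying by the extra factor $H$ only enlarges the tangent space, so $B$ is locally versal at every point.

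The fibres of $B$ over $H$ need not be smooth, since $H$ itself may be singular. This is harmless: versality only requires the classifying map $B\to\prod_i B(c_i)$ to be smooth, and that map factors through the smooth projection $B\to U$. If smoothness of $B$ itself is needed, replace $B$ by $U\times V$, where $H\hookrightarrow V$ is a locally closed embedding into a smooth variety $V$.

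\textbf{Main obstacle.} The delicate point is the uniformity in Step 2–3: one must choose $k$ so that $H^1$ of both $N$ and $N'$ vanishes on every fibre $C_h$, $h\in H$, at once. I would obtain this from boundedness. The fibres form a bounded family with bounded $\delta$-invariant, and Serre vanishing holds uniformly over the quasi-compact base $H$. Concretely, I would apply relative Serre vanishing to the sheaves $\mathcal{O}(1)^{N+1}$ and $N$ on $\mathcal{C}^H$ over $H$, and then use cohomology and base change to pass to each fibre.
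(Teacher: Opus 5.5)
The paper does not prove this lemma; it imports it from \cite[Lemma 4.2]{gv}. So your proposal is best read as a self-contained replacement for a citation, and as such it is correct. The route is the natural one:
\begin{itemize}
\item embed $\mathcal{C}^H\subset\mathbb{P}^N\times H$ by a high power of a relatively ample bundle;
\item pass to the open part $U$ of the Hilbert scheme consisting of integral planar curves;
\item deduce surjectivity of $H^0(N)\to\bigoplus_i T^1_{c_i}$ from $H^1(N')=0$, using that $N'$ is a quotient of $\mathcal{O}_C(1)^{N+1}$.
\end{itemize}
What your argument buys over the citation is that it exposes a hypothesis the statement suppresses. A closed subscheme of a smooth $B$ must embed in a smooth scheme, so something like quasi-projectivity of $H$ (or working locally on $H$) is genuinely needed. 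This is harmless here, because the paper only applies the lemma to smooth, local bases $T$.

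Three points should be tightened.

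(i) The paper's definition of local versality requires a smooth base, so $U\times H$ only works when $H$ is smooth (as in the paper's applications). In general take $B=U\times V$ with $V\subset\mathbb{P}^M$ open and $H$ \emph{closed} in $V$. A merely locally closed $H\subset V$, as you wrote, does not give a closed immersion $H\hookrightarrow B$. With $H$ closed in $V$, the graph is closed because $U$ is separated.

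(ii) Versality is required at every point of $B$, not only along the image of $H$. The condition $H^1(N_{C_u})=0$ that you use to define $U$ does not give this, since $H^1(N'_u)$ is then the cokernel of $H^0(N_u)\to\bigoplus T^1$, which need not vanish. Use instead that all curves in the Hilbert scheme share the same degree $d$ and arithmetic genus $g_a$. Once $d>2g_a-2$, every integral Gorenstein $C_u$ satisfies $H^1(\mathcal{O}_{C_u}(1))\cong H^0(\omega_{C_u}(-1))^{\vee}=0$. Hence $H^1(N'_u)=0$, and the tangent map is surjective at every $u\in U$. Alternatively, shrink $U$ to the open locus where $H^1(\mathcal{O}_{C_u}(1))=0$.

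(iii) Your ``main obstacle'' is not one, and relative Serre vanishing applied to $N$ is the wrong tool, since $N$ itself changes with $k$. You never need it. Because $N/N'$ is a skyscraper, $H^1(N)$ is a quotient of $H^1(N')$ and so vanishes with it. Uniformity over $H$ comes for free, since fibre degree and $g_a$ are locally constant in a flat family.
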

\subsection{Moduli of stable maps}
Turning now to moduli spaces of stable maps, recall that a genus $g_0$ stable map to a variety $X$ with class $\beta\in H_2(X,\mathbb{Z})$ is a morphism $$
\mu:\Sigma\To X
$$ such that $\Sigma$ is a connected projective nodal curve of arithmetic genus $g_0$, $\mu_*[\Sigma]=\beta,$ and the automorphism group of $\mu$ is finite. We are interested in the moduli $M_{g}(\mathcal{C}, [C])$ of genus $g$ stable maps to $\mathcal{C}$ with class $[C].$

By semicontinuity of the geometric genera, shrinking $B$ if necessary, we may assume that all fibres of the locally versal family $\mathcal{C}/B$ have geometric genus at least $g.$ Closed points of $M_{g}(\mathcal{C}, [C])$ are then in 1-1 correspondence with the normalizations
\begin{equation}\label{nor}
\Tilde{\mathcal{C}}_t \to \mathcal{C}_{t}\subset \mathcal{C}
\end{equation}
of curves over $t\in B^{\delta},$ since the images of the stable maps have geometric genus at least $g.$

The moduli space $M_{g}(\mathcal{C}, [C])$ is fine because all stable maps have only the trivial automorphism. There exists a universal stable map
\begin{equation*} 
p:\mathcal{U}\to  \mathcal
{C},
\end{equation*}
where $$\pi:\mathcal{U}\xrightarrow[]{}M_{g}(\mathcal{C}, [C])$$ is a smooth family of genus $g$ curves over $M_{g}(\mathcal{C}, [C]).$

Since we are considering the fibre curve class $[C]\in H_2(\mathcal{C},\mathbb{Z})$, the moduli spaces of stable maps behave well under restriction to subfamilies. 
\begin{lemma}\label{lem 3.3}
There exists a natural bijective map 
\begin{equation} \label{eq 5}
f:M_{g}(\mathcal{C}, [C])\To B^{\delta}\subset B
\end{equation}
sending the normalization \emph{(\ref{nor})} to the point $t\in B^{\delta}.$

Moreover, for any closed subscheme $Z\subset B$ that contains $[C]\in B,$ the moduli space of stable maps $M_{g}(\mathcal{C}\times_{B}Z, [C])$ is isomorphic to $$M_{g}(\mathcal{C}, [C])\times_B Z.$$ 
\end{lemma}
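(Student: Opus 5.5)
The plan is to treat the two assertions of Lemma~\ref{lem 3.3} separately. Bijectivity of $f$ comes from the description of closed points already given. The base-change statement comes from comparing moduli functors: both sides represent the same functor on schemes over $Z$.

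\emph{Construction of $f$.} The composition $\mathcal{U}\xrightarrow{p}\mathcal{C}\to B$ is what we start from. I want to show it factors through $\pi$, i.e.\ that it is constant on the fibres of $\pi$ scheme-theoretically. Each fibre $\mathcal{U}_s$ is a smooth connected projective curve, so $\mathcal{O}(\mathcal{U}_s)=\mathbb{C}$. More generally $\pi_*\mathcal{O}_{\mathcal{U}}=\mathcal{O}_{M}$, since $\pi$ is a smooth proper family with geometrically connected, reduced fibres (cohomology and base change for $h^0$). Because $B$ is affine (we may shrink it), the composite $\mathcal{U}\to B$ therefore factors uniquely through $\pi$, giving a morphism $f:M_g(\mathcal{C},[C])\to B$.

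On closed points, a stable map $\mu$ of class $[C]$ with connected source has image in the single fibre $\mathcal{C}_t$. The class is the fibre class, so $\mu$ is birational onto $\mathcal{C}_t$, and hence $\mu$ is the normalization (\ref{nor}) with $t\in B^\delta$. So $f$ sends it to $t$. Injectivity holds because each $t$ gives a unique normalization; surjectivity onto $B^\delta$ holds because every $t\in B^\delta$ has geometric genus exactly $g$, so its normalization is a genus $g$ stable map of class $[C]$. Thus $f$ is a bijection onto $B^\delta$ on closed points; set-theoretically it lands in $B^\delta$ (scheme-theoretically it is just a map to $B$ whose image lies in $B^\delta$).

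\emph{Base change.} For a $Z$-scheme $T$, a family of genus $g$ stable maps to $\mathcal{C}\times_B Z$ over $T$ is the same as a family of stable maps to $\mathcal{C}$ over $T$ together with a factorization of $\mathcal{U}_T\to B$ through $Z$. By the factorization argument above, applied over any base $T$ using $\pi_{T*}\mathcal{O}=\mathcal{O}_T$, the map $\mathcal{U}_T\to B$ factors through $T\to B$. Hence it factors through $Z$ exactly when $T\to M_g(\mathcal{C},[C])\xrightarrow{f}B$ factors through $Z$. Stability and the curve class are unchanged, since stability is a condition on fibres and the class $[C]$ of $\mathcal{C}\times_B Z$ maps to $[C]$. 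Both functors agree, so by Yoneda $M_g(\mathcal{C}\times_BZ,[C])\cong M_g(\mathcal{C},[C])\times_BZ$.

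\emph{Main obstacle.} The delicate point is the scheme-theoretic factorization $\mathcal{U}_T\to B$ through $T$ for nonreduced $T$, which is needed because the moduli spaces are fat points. I would handle it via $\pi_{T*}\mathcal{O}_{\mathcal{U}_T}=\mathcal{O}_T$: for $B\subset\mathbb{A}^N$, the coordinate functions pulled back to $\mathcal{U}_T$ descend to $T$.
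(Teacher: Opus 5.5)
Your proposal is correct and follows essentially the paper's proof: $f$ is built from $\pi_*\mathcal{O}_{\mathcal{U}}=\mathcal{O}_{M_g(\mathcal{C},[C])}$ with $B$ affine, and the base-change statement comes from the fact that $\mathcal{U}\to B$ factors through $Z$ exactly when the base of the family maps into $Z$. Your Yoneda argument is the functor-of-points version of the paper's step identifying $f^{-1}(Z)$ and $M_g(\mathcal{C}\times_B Z,[C])$ as the largest closed subschemes $V$ with the respective factorisation property, and it makes that step a bit more rigorous; note only that the test schemes $T$ should be arbitrary schemes, not $Z$-schemes.
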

\begin{proof}
Consider the smooth universal family $
\pi:\mathcal{U}\to M_{g}(\mathcal{C}, [C]).$
By standard base-change arguments,  we have
\begin{equation}\label{eq 6}
\pi_*\mathcal{O}_{\mathcal{U}}=\mathcal{O}_{M_{g}(\mathcal{C}, [C])}.
\end{equation}

Assume first that $B$ is affine. From the map $\mathcal{U}\xrightarrow[]{p}  \mathcal
{C}\to B$ we get $$H^0(\mathcal{O}_B)\To H^0(\mathcal{O}_{\mathcal{U}})\cong H^0(\mathcal{O}_{M_{g}(\mathcal{C}, [C])}).$$ This
induces a map $f:M_{g}(\mathcal{C}, [C]) \to B$ 
that makes the following (non-Cartesian) diagram commute. For general $B,$ by considering an affine open cover of $B,$ we obtain such a map $f.$
$$
\begin{tikzcd}
 \mathcal{U} \arrow{r}{p} \arrow{d}{\pi} &  \mathcal{C} \arrow{d}{}  \\ 
 M_{g}(\mathcal{C}, [C]) \arrow[r, dashed, "f"] & B
\end{tikzcd}
$$

The map $
\mathcal{C}\times_{B}Z\xhookrightarrow{  } \mathcal{C}
$ induces a closed immersion 
$$M_{g}(\mathcal{C}\times_{B}Z, [C])\xhookrightarrow{\text{ \ \  }}  M_{g}(\mathcal{C}, [C]).$$ We now identify this closed subscheme $M_{g}(\mathcal{C}\times_{B}Z, [C])\subset M_{g}(\mathcal{C}, [C])$ with the closed subscheme $M_{g}(\mathcal{C}, [C])\times_B Z.$

The relation (\ref{eq 6}) and the above commutative diagram imply that for any closed subscheme $V\subset M_{g}(\mathcal{C}, [C]),$ the restriction $$V\subset M_{g}(\mathcal{C}, [C]) \xrightarrow[]{\text{ }f\text{ }} B$$ factors through the closed subscheme $Z\subset B$ if and only if the restriction 
$$
\pi^{-1}(V)\subset \mathcal{U} \To B
$$ factors through $Z.$
Note that $f^{-1}(Z)$ is the largest such $V$ so that $V \xrightarrow[]{} B$ factors through $Z,$ and $M_{g}(\mathcal{C}\times_{B}Z, [C])$ is the largest such $V$ so that $
\pi^{-1}(V) \to B
$ factors through $Z.$ They therefore coincide.
\end{proof}

The following lemma relates $B^{\delta}$ and $M_{g}(\mathcal{C}, [C]).$

\begin{lemma}[\cite{DH},\cite{eu}] \label{1}
Shrinking $B$ if necessary, $B^{\delta}$ is irreducible. The space $M_{g}(\mathcal{C}, [C])$ is a smooth variety, and the bijective morphism to $B^{\delta}$ in \emph{(\ref{eq 5})}
\begin{equation*}
f:M_{g}(\mathcal{C}, [C])\To B^{\delta}
\end{equation*}
is the normalization of $B^{\delta}$.   
\end{lemma}
\begin{proof}
The statements are  \cite[Proposition F.2]{eu} (see also \cite[Proposition 4.17]{DH}): while \cite[Proposition F.2]{eu} is stated for a semi-universal family $\mathcal{C}/B$, only the smoothness of $B\to\prod_{i=1}^{n}B(c_i)$ is used in the proof and the results hold for the locally versal families as well.
\end{proof}

\subsection{The multiplicity $m([C],B^{\delta})$}
Applying Lemma \ref{2} to the normalization $f$ in (\ref{eq 5}), we prove: 
\begin{thm}[Theorem \ref{theorem 1}]\label{thm 3.1}
The multiplicity of the point $[C]$ in $B^{\delta}$ satisfies:
$$
m([C],B^{\delta})\geq \len\bigl(M_{g}(C,[C])\bigr),
$$ and the equality holds if and only if $M_{g}(C,[C])$ is a local complete intersection.  
\end{thm}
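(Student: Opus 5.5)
We work with a locally versal family $\mathcal{C}/B$. By the discussion after the definition of locally versal families, $m([C],B^{\delta})$ equals the corresponding multiplicity for $S=B$, so we may compute it there. Let $M:=M_{g}(\mathcal{C},[C])$ and let $f\colon M\to B^{\delta}$ be the normalization of Lemma \ref{1}. Set $Z=\{[C]\}$, the reduced point, in $B^{\delta}$. By Lemma \ref{lem 2.2}, $m([C],B^{\delta})=s([C],B^{\delta})$. Since $f$ is proper and birational between irreducible varieties, Lemma \ref{2} gives
$$m([C],B^{\delta})=g_*\,s\bigl(f^{-1}([C]),M\bigr).$$
The map $g\colon f^{-1}([C])\to [C]$ is a bijection on points, because $f$ is bijective.

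The next step identifies $f^{-1}([C])$. The reduced point $[C]\subset B$ is a closed subscheme, so Lemma \ref{lem 3.3} gives
$$f^{-1}([C])=M\times_B[C]\cong M_{g}(\mathcal{C}\times_B[C],[C])=M_g(C,[C]).$$
Call this fat point $W\subset M$, supported at the single point $w=[f_C]$. Lemma \ref{lem 2.2} applied on $M$ then yields
$$m([C],B^{\delta})=s(W,M)=e(I_W),$$
where $I_W$ is the ideal of $W$ in $\mathcal{O}_{M,w}$. Now $M$ is smooth by Lemma \ref{1}, so $R:=\mathcal{O}_{M,w}$ is a regular local ring of dimension $n=\dim B^{\delta}$, and $I_W$ is $m_w$-primary with $R/I_W\cong\mathcal{O}_{M_g(C,[C])}$.

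The theorem is thereby reduced to the following statement of commutative algebra: for an $m$-primary ideal $I$ in a regular (hence Cohen--Macaulay) local ring $R$ of dimension $n$, one has $e(I)\ge \len(R/I)$, with equality if and only if $I$ is generated by $n$ elements, i.e.\ if and only if $R/I$ is a complete intersection. For a fat point in a smooth variety, this is the same as $W$ being a local complete intersection, since the local complete intersection property is intrinsic.

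To prove the reduction, the hardest step, we first pass to the infinite residue field $\mathbb{C}$, which guarantees a minimal reduction $J=(x_1,\dots,x_n)\subset I$ generated by a system of parameters with $\overline{J}=\overline{I}$, the integral closures. Then $e(I)=e(J)$, and since $R$ is Cohen--Macaulay, $e(J)=\len(R/J)\ge\len(R/I)$. Equality forces $J=I$, so $I$ is generated by a regular sequence. Conversely, if $I$ is generated by a regular sequence, then $e(I)=\len(R/I)$, again by the Cohen--Macaulay property, since the associated graded ring is then a polynomial ring over $R/I$. Finally, if $W$ is lci then $I$ is generated by $n=\operatorname{codim}$ elements in the regular ring $R$, and these form a system of parameters, hence a regular sequence. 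This completes both the inequality and the characterization of equality.
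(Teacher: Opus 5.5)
Your proof is correct and follows the paper's argument. You pass to the normalization $f\colon M_g(\mathcal{C},[C])\to B^{\delta}$, use birational invariance of Segre classes to get $m([C],B^{\delta})=s\bigl(f^{-1}([C]),M_g(\mathcal{C},[C])\bigr)$, and identify $f^{-1}([C])$ with $M_g(C,[C])$ via Lemma \ref{lem 3.3}. The only difference is at the last step: the paper cites Fulton's Example 4.3.5(c) for the inequality and its equality case, whereas you prove that commutative-algebra fact yourself via a minimal reduction in the regular (hence Cohen--Macaulay) local ring, and that argument is valid.
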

\begin{proof}
Consider the bijective normalization map (\ref{eq 5}):
$$
f: M_{g}(\mathcal{C}, [C])\to B^{\delta}.
$$
The fat point $f^{-1}([C])$ is the moduli of stable maps 
$M_{g}(C,[C])$ by Lemma \ref{lem 3.3}.

Lemma \ref{lem 2.2} and \ref{2} give:
$$
m([C],B^{\delta})=s([C],B^{\delta})=s\bigl(f^{-1}([C]), M_{g}(\mathcal{C}, [C])\bigr).
$$

By \cite[Example 4.3.5 (c)]{ful}, 
$$
s\bigl(f^{-1}([C]), M_{g}(\mathcal{C}, [C])\bigr)\geq \len\bigl(f^{-1}([C])\bigr),
$$ and the equality holds if and only if $f^{-1}([C])=M_{g}(C,[C])$ is a local complete intersection in the smooth variety $M_{g}(\mathcal{C}, [C])$.  
\end{proof}

\begin{remark}\label{re}
In the case where $C$ is the compactification of $x^p=y^q$ in $\mathbb{C}^2$ which is smooth at $\infty$ and $p,q\in \mathbb{Z}_+,(p,q)=1,$ it is shown in \emph{\cite[Section G]{eu}} that $M_{0}(C, [C])$ is a local complete intersection, and indeed $
m([C],B^{\delta})= \len\bigl(M_{0}(C,[C])\bigr).
$
\end{remark}
Next, we prove Theorem \ref{thm 2}. 
\begin{thm}[Theorem \ref{thm 2}]\label{thm 3.2}
 Let $\mathcal{C}^{T}\to T$ be a flat family of integral planar curves. Suppose that $T$ and the relative compactified Jacobian $\Jacbar(\mathcal{C}^{T}/T)$ are smooth, and all fibres of $\mathcal{C}^{T}/T$ other than $\mathcal{C}^{T}_0=C$ have geometric genus greater than $g$. Then 
\begin{equation*} 
m([C], B^{\delta})=\len(M_{g}(\mathcal{C}^{T},[C])).
\end{equation*}    
\end{thm}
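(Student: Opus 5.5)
The plan is to realise $\len(M_{g}(\mathcal{C}^{T},[C]))$ as an intersection multiplicity of $T$ with $B^{\delta}$ inside a locally versal base. Then show that this intersection is \emph{generic}, so that Lemma \ref{lem 2.1} turns it into $m([C],B^{\delta})$.

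First I apply Lemma \ref{lem 3.1} to $\mathcal{C}^{T}\to T$. This gives a locally versal family $\mathcal{C}\to B$ and a closed immersion $T\hookrightarrow B$ with $\mathcal{C}^{T}=\mathcal{C}\times_B T$. After shrinking, Lemma \ref{1} and the preliminary discussion (which reduces $m([C],B^{\delta})$ to any locally versal family) make $f\colon M:=M_g(\mathcal{C},[C])\to B^{\delta}$ the normalization of $B^{\delta}$, with $M$ smooth and irreducible. By Lemma \ref{lem 3.3},
\[
M_{g}(\mathcal{C}^{T},[C])\cong M\times_B T=f^{-1}(T).
\]
The hypothesis that every fibre other than $\mathcal{C}^T_0$ has geometric genus $>g$ says that $T\cap B^{\delta}=\{[C]\}$ set-theoretically. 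Hence $f^{-1}(T)$ is a fat point supported at $[f_C]$, because $f$ is bijective.

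Next I compute dimensions and the length. Near $[C]$, $B^{\delta}$ (and hence $M$) has codimension $\delta=\delta(C)$ in $B$. I would show that $\dim T=\delta$, i.e.\ that $T$ and $B^{\delta}$ meet properly in the smooth $B$. Granting this, $T$ is locally cut out in $B$ by $\dim B-\delta=\dim M$ equations. Their pullbacks cut out the zero-dimensional $f^{-1}(T)$ in the smooth variety $M$, so $f^{-1}(T)$ is a complete intersection, hence Cohen--Macaulay. Its length therefore equals the refined intersection number $T\cdot_f [M]$. By the projection formula ($f_*[M]=[B^{\delta}]$, since $f$ is birational), this equals $i([C],T\cdot B^{\delta};B)$. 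This gives
\[
\len(M_g(\mathcal{C}^T,[C]))=i([C],T\cdot B^{\delta};B)\geq m([C],B^{\delta})
\]
by Lemma \ref{lem 2.1}. Moreover, by \cite[Cor.~12.4]{ful}, equality holds if and only if the tangent space $T_{[C]}T$ meets the tangent cone $C_{[C]}B^{\delta}$ only at the origin.

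The main obstacle is extracting both the dimension count $\dim T=\delta$ and the tangent-cone transversality from the smoothness of $\Jacbar(\mathcal{C}^T/T)$. My first attempt would use the fact, from \cite{eu} (their analysis of relative compactified Jacobians of locally versal families), that $\Jacbar(\mathcal{C}/B)$ is smooth and that its fibre dimension jumps exactly along $B^{\delta}$-type strata. Smoothness of $\Jacbar(\mathcal{C}^T/T)=\Jacbar(\mathcal{C}/B)\times_B T$ at points over $[C]$ should then force $T$ to be transverse, in the dynamic sense of Remark \ref{rmk 2.1}, to the $\delta$-constant stratum. Concretely, the Zariski tangent space of the fibre product is the preimage of $T_{[C]}T$ under the differential of $\Jacbar(\mathcal{C})\to B$ at a point of $\Jacbar_C$. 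For this to have dimension $\dim T+g_a$, the image of that differential must be transverse to $T_{[C]}T$.

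I would try to identify the union of these images, over the points of $\Jacbar_C$ (in particular over the most singular sheaves), with a space whose annihilator recovers the tangent cone of $B^{\delta}$, following \cite[Section F]{eu}. This yields $T_{[C]}T\cap C_{[C]}B^{\delta}=0$, together with $\dim T\leq\delta$. Combined with properness ($\dim T\ge\delta$, since $T\cap B^{\delta}$ is a point in the smooth $B$ and $B^{\delta}$ has codimension $\delta$), this gives exactly the two inputs needed above. Putting the pieces together yields $m([C],B^{\delta})=\len(M_g(\mathcal{C}^T,[C]))$.
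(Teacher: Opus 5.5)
Your overall route is the paper's. You embed $T$ in a locally versal $B$ and identify $M_g(\mathcal{C}^T,[C])$ with $f^{-1}(T)$ for the normalization $f\colon M\to B^\delta$. You then use that $f^{-1}(T)$ is a complete intersection in the smooth $M$, push forward to $i([C],T\cdot B^\delta;B)$, and conclude with \cite[Theorem 12.4]{ful}. Your projection formula is equivalent to the paper's birational invariance of Segre classes.

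The gap is in the step you call the main obstacle: the two inequalities are attributed to the wrong sources. An isolated intersection $T\cap B^\delta=\{[C]\}$ in the smooth $B$ gives $0\geq \dim T+\dim B^\delta-\dim B=\dim T-\delta$, i.e.\ $\dim T\leq\delta$, not $\dim T\geq\delta$. The one-point family $T=\{[C]\}$ already shows that isolatedness yields no lower bound. Conversely, smoothness of the relative compactified Jacobian cannot on its own force $T_{[C]}T\cap C_{[C]}B^\delta=0$ or $\dim T\leq\delta$. The locally versal base $T=B$ has smooth $\Jacbar(\mathcal{C}/B)$, yet in general $\dim B>\delta$ and $T_{[C]}B$ contains the whole tangent cone. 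So, as written, you derive the upper bound twice and never the lower bound $\dim T\geq\delta$. That lower bound is exactly what you need for $T$ to be cut out by $\dim M$ equations (so that $f^{-1}(T)$ is a complete intersection) and for $T\cdot B^\delta$ to be a proper intersection.

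The roles must be reversed, as in the paper. Smoothness of $\Jacbar(\mathcal{C}^T/T)$ is a \emph{spanning} condition: at each point $x$ of $\Jacbar_C$ you need $\operatorname{im}(d\phi_x)+T_{[C]}T=T_{[C]}B$. By \cite[Corollary B.3, Proposition C.5]{eu} and \cite[Theorem D.2]{eu}, this says $T$ is transverse to the support $L$ of the tangent cone of $B^\delta$, a linear subspace of codimension $\delta$. That is, $T_{[C]}T+L=T_{[C]}B$, whence $\dim T\geq\delta$.

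Taking the union of the images and its annihilator does not help. $\Jacbar(\mathcal{C}/B)\to B$ is smooth at invertible sheaves, so that union is all of $T_{[C]}B$; the binding constraint comes from the most degenerate sheaves, where the image is smallest.

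Combining $\dim T\geq\delta$ with $\dim T\leq\delta$ from isolatedness gives $\dim T=\delta$. Then $T_{[C]}T+L=T_{[C]}B$ forces $T_{[C]}T\cap L=0$ by a dimension count. With this in hand, the rest of your argument goes through.
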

By Lemma \ref{lem 3.1}, given such a family $\mathcal{C}^{T}\to T$, we may assume $\mathcal{C}^{T}/T$ is pulled back from the locally versal family $\mathcal{C}/B$ via a closed immersion $T\xhookrightarrow{} B.$  Moreover, the desired base $T$ always exists since we can take it to be any generic $\delta(C)$-dimensional smooth subvariety of $B,$ by the smoothness criterion \cite[Corollary B.3 and Proposition C.5]{eu} for the relative compactified Jacobians. 
\begin{proof}
Recall from  \cite[Theorem D.2(3)]{eu} that $B^{\delta}$ has codimension $\delta(C)$ in $B$. The smooth subspace $T\subset B$ has dimension at most $\delta(C)$ since it intersects $B^{\delta}$ only at the point $[C].$

Moreover, since the relative compactified Jacobian $\Jacbar(\mathcal{C}^{T}/T)$ is smooth, $T$ is transverse to the the support of the tangent cone of $B^{\delta}$ at $[C]$ by \cite[Corollary B.3 and Proposition C.5]{eu}. This support is a codimension $\delta(C)$ linear subspace of $T_{[C]}B$ by \cite[Theorem D.2]{eu}, therefore $T$ has dimension at least and hence exactly $\delta(C).$

We claim that $m([C], B^{\delta})$ is the intersection multiplicity $i([C], B^{\delta}\cdot T; B).$ Indeed, inside $T_{[C]}B,$ the tangent space of $T$ and the tangent cone of $B^{\delta}$ intersect only at $0$, and their projectivisations are disjoint in the projectivisation of $T_{[C]}B$. The formula in \cite[Theorem 12.4(a)]{ful} then shows  
\begin{equation}\label{eq7}
i([C], B^{\delta}\cdot T; B)=m([C],T)\times m([C],B^{\delta})=m([C],B^{\delta}).
\end{equation}

Because $T$ is smooth, $T\xhookrightarrow{}B$ is a regular embedding and 
\begin{equation}
i([C], B^{\delta}\cdot T; B)=s(B^{\delta}\times_{B}T,B^{\delta}).
\end{equation}

Consider the birational morphism $f$ in (\ref{eq 5}): $$
f: M_{g}(\mathcal{C}, [C])\To B^{\delta}.
$$ 
By the birational invariance of Segre classes (Lemma \ref{2}), 
\begin{equation}
s(B^{\delta}\times_{B}T,B^{\delta})=s(f^{-1}(B^{\delta}\times_{B}T),M_{g}(\mathcal{C}, [C])).
\end{equation}

Since $f$ is bijective, $f^{-1}(B^{\delta}\times_{B}T)=M_{g}(\mathcal{C}, [C])\times_B T$ is a fat point. Since $T\xhookrightarrow{}B$ is a regular embedding, $T$ is locally cut out in $B$ by $\text{dim}_{\mathbb{C}}B^{\delta}$ regular functions. Then the fat point $f^{-1}(B^{\delta}\times_{B}T)$ in the smooth $M_{g}(\mathcal{C}, [C])$ is also cut out by $\text{dim}_{\mathbb{C}}B^{\delta}=\text{dim}_{\mathbb{C}}M_{g}(\mathcal{C}, [C])$ functions,  and 
the normal cone of $f^{-1}(B^{\delta}\times_{B}T)\subset M_{g}(\mathcal{C}, [C])$ is a trivial vector bundle. We see \begin{equation}
s(f^{-1}(B^{\delta}\times_{B}T),M_{g}(\mathcal{C}, [C]))=\len(f^{-1}(B^{\delta}\times_{B}T)).
\end{equation}

Combining the above, we arrive at the equality 
\begin{equation}
m([C],B^{\delta})=\len(f^{-1}(B^{\delta}\times_{B}T)).
\end{equation}
To conclude, we note that $f^{-1}(B^{\delta}\times_{B}T)=M_{g}(\mathcal{C}, [C])\times_B T$ is isomorphic to $M_{g}(\mathcal{C}\times_{B}T, [C])=M_{g}(\mathcal{C}^{T}, [C])$ by Lemma \ref{lem 3.3}. 
\end{proof}
\begin{remark}
In light of Lemma \emph{\ref{lem 2.1}}, the equality \emph{(\ref{eq7})} also shows that the intersection multiplicity $i([C], B^{\delta}\cdot T; B)$ is minimal for a $\delta(C)$-dimensional subvariety $T\subset B.$
\end{remark}
\begin{corollary}[Corollary \ref{coro 1}]\label{coro 3.1}
Suppose $C$ is a rational curve on a K3 surface $S.$ The moduli space of stable maps $M_0(S,[C])$ contains a connected component $M_0(S,[C])_{[f_C]}$ with a unique closed point $[f_C:\mathbb{P}^1\to S].$ We have
\begin{equation*}
\chi(\Jacbar_C)=\len(M_0(S,[C])_{[f_C]}).
\end{equation*}
\end{corollary}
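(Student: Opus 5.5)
The plan is to take $T$ to be an open subset of the linear system $|C|$ and reduce to Theorem \ref{thm 3.2} combined with the Fantechi--G\"ottsche--van Straten formula (\ref{eq 1}) in the case $g=0$.

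\emph{Step 1: the family.} Since $C$ is integral, the curves in $|C|$ near $[C]$ are integral, so there is a Zariski open neighbourhood $U\subset|C|$ of $[C]$ such that the universal curve $\mathcal{C}^{U}\to U$ is a flat family of integral planar curves; planarity holds because the curves lie on the surface $S$. By semicontinuity of geometric genus, only finitely many curves in a suitable neighbourhood have genus $0$: rational curves in $|C|$ are rigid, by the standard deformation argument for maps $\mathbb{P}^1\to S$ on a K3. So after shrinking we may take $T\subset U$ to be a neighbourhood of $[C]$ whose only rational fibre is $C$ itself. We have $\dim T=g_a(C)=\delta(C)$ because $g=0$. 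The relative compactified Jacobian $\Jacbar(\mathcal{C}^{T}/T)$ is smooth: it is an open piece of the Beauville--Mukai system, i.e.\ of a moduli space of stable sheaves on $S$, which is smooth symplectic. Hence the hypotheses of Theorem \ref{thm 3.2} hold, and we obtain
\[
m([C],B^{\delta})=\len\bigl(M_0(\mathcal{C}^{T},[C])\bigr).
\]
Combining this with (\ref{eq 1}) gives $\chi(\Jacbar_C)=\len(M_0(\mathcal{C}^{T},[C]))$.

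\emph{Step 2: identification with $M_0(S,[C])_{[f_C]}$.} Consider the map $M_0(\mathcal{C}^{T},[C])\to M_0(S,[C])$ given by composing with $\mathcal{C}^{T}\to S$. I will show that it is an open immersion onto the connected component containing $[f_C]$.

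Conversely, take a family of stable maps $\mu:\Sigma\to S$ over a base $W$ with class $[C]$, with image near $C$. Its image is a relative effective divisor in $S\times W$ of class $[C]$, and this divisor defines a classifying map $W\to|C|$. The map $\mu$ then factors through the pulled-back universal curve. The point to check is that this factorisation is scheme-theoretic: the image divisor should be the Fitting-ideal image $\mu_*$ divisor, which is compatible with base change. Over a genus-$0$ point the map is birational onto an integral curve, so the degree is $1$ and the Fitting image is reduced on the central fibre.

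The two constructions are mutually inverse, which identifies an open neighbourhood of $[f_C]$ in $M_0(S,[C])$ with $M_0(\mathcal{C}^{T},[C])$. Every closed point of $M_0(\mathcal{C}^T,[C])$ is a normalization of a rational fibre, so $[f_C]$ is the unique closed point. A zero-dimensional open subscheme is also closed, hence it is a connected component.

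\emph{Main obstacle.} I expect the hardest step to be the scheme-theoretic, as opposed to merely set-theoretic, factorisation of families of stable maps through the universal curve over $|C|$. To handle it I would use the fact that $\mathcal{O}_S(C)$ is rigid on a K3, since $H^1(\mathcal{O}_S)=0$. Then $\mu^*\mathcal{O}(C)$ together with the pulled-back section of $\mathcal{O}_S(C)$ cutting out the image should produce the map to $|C|=\mathbb{P}H^0(\mathcal{O}_S(C))$. Alternatively, I would apply the argument of Lemma \ref{lem 3.3}, with $|C|$ in place of $B$, to the Hilbert-scheme description of $|C|$.
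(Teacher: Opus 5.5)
Your proposal is correct and follows the paper's proof. You choose $T\subset|C|$ an open neighbourhood of $[C]$ with only integral fibres and no rational curve other than $C$, and get smoothness of $\Jacbar(\mathcal{C}^{T}/T)$ from its embedding as an open subset of a moduli space of stable sheaves on $S$. You then combine Theorem \ref{thm 3.2} with (\ref{eq 1}), and identify $M_0(\mathcal{C}^{T},[C])$ with the component: in one direction by composing with $\mathcal{C}^{T}\to S$, in the other by factoring the finite (hence affine) universal map through the pulled-back universal curve.

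The only variation is how you get the map to $|C|$: you take the Fitting support of $\mu_*\mathcal{O}_\Sigma$ and use rigidity of $\mathcal{O}_S(C)$, whereas the paper classifies $\nu_*\mathcal{O}_{\mathbb{U}}$ in the moduli of stable sheaves and projects $\Jacbar(\mathcal{C}^{T}/T)\to T$. This is the same support map. The paper's modular phrasing makes automatic the one point your version must check for the two constructions to be mutually inverse: the Fitting support of a flat, fibrewise rank-one sheaf on $\mathcal{C}^{T}\times_T W$ is exactly that divisor, which holds because flatness forces its annihilator to be the ideal of the divisor.
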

\begin{proof}
Since $S$ has Kodaira dimension 0, there are only finitely many rational curves in the linear system $|C|.$ Therefore $[C]$ has an open neighbourhood $T\subset |C|$ which parametrises only integral curves on $S$ and no rational curves other than $C.$ This shows the existence of the component $M_0(S,[C])_{[f_C]}.$

Denote the universal curve over $T$ by $\mathcal{C}^{T}.$ By \cite[Section 5.3]{gv}, the relative compactified Jacobian $\Jacbar(\mathcal{C}^{T}/T)$ is isomorphic to an open subset of the moduli of stable one-dimensional sheaves on $S$ with $ch_2=[C].$ Therefore $\Jacbar(\mathcal{C}^{T}/T)$ is smooth, and we may take the desired family in Theorem \ref{thm 3.2} to be $\mathcal{C}^{T}/T.$

We finish by constructing maps between $M_0(\mathcal{C}^{T},[C])$ and $M_0(S,[C])_{[f_C]},$ which are inverse to each other. The composition $$
\mathcal{C}^{T}\subset S\times T\To S
$$ induces a map 
\begin{equation}\label{map 1}
M_0(\mathcal{C}^{T},[C]) \To  M_0(S,[C])_{[f_C]}.
\end{equation}

For the reverse direction, first note that the stable map $f_C:\mathbb{P}^1\to S$ has only the trivial automorphism. There then exists a universal family of stable maps $$
\nu:\mathbb{U}\To M_0(S,[C])_{[f_C]}\times S,
$$ where $\mathbb{U}\to M_0(S,[C])_{[f_C]}$ is a smooth family of rational curves.

The map $\nu$ is finite, and in particular affine. Pushing forward the structure sheaf via $\nu,$ $\nu_*\mathcal{O}_{\mathbb{U}}$ is a flat $M_0(S,[C])_{[f_C]}$-family of sheaves on $S.$ This induces a map $$
M_0(S,[C])_{[f_C]}\To \Jacbar(\mathcal{C}^{T}/T),
$$ since $\Jacbar(\mathcal{C}^{T}/T)$ is isomorphic to an open subset of the moduli of stable sheaves on $S.$ Consider the composition $$
M_0(S,[C])_{[f_C]}\To \Jacbar(\mathcal{C}^{T}/T)\To T. 
$$ This makes $M_0(S,[C])_{[f_C]}$ a $T$-scheme. We claim that the stable map $\nu$ factors as 
\begin{equation}\label{eq 11}
\nu:\mathbb{U}\To M_0(S,[C])_{[f_C]}\times_T \mathcal{C}^{T}\xhookrightarrow{\text{ \ \ }} M_0(S,[C])_{[f_C]}\times S,
\end{equation}
giving the desired 
\begin{equation}\label{map 2}
M_0(S,[C])_{[f_C]}\To M_0(\mathcal{C}^{T},[C]).
\end{equation} Indeed, the sheaf $\nu_*\mathcal{O}_{\mathbb{U}}$ is supported on $M_0(S,[C])_{[f_C]}\times_T \mathcal{C}^{T}$ as it is pulled back from the universal sheaf on $\Jacbar(\mathcal{C}^{T}/T)\times_T \mathcal{C}^{T},$ the factorisation (\ref{eq 11}) follows since $\nu$ is affine.

The maps (\ref{map 1}), (\ref{map 2}) between the two moduli spaces $M_0(S,[C])_{[f_C]}$ and $M_0(\mathcal{C}^{T},[C])$ are inverse to each other, since at the level of universal objects they exchange the corresponding universal stable maps. 
\end{proof}

\printbibliography
\end{document}